\documentclass[11pt]{amsart}
\usepackage{FrankTall}
\usepackage[utf8]{inputenc} 
\usepackage{mathtools,amssymb,amsxtra,mathrsfs}
\usepackage{bm} 
\usepackage{xcolor,enumitem}
\usepackage{ragged2e}


\usepackage{indentfirst}



\newcommand{\w}{\omega}

\renewcommand{\set}[1]{\{#1\}}
\renewcommand{\abs}[1]{\left| #1 \right|}


\newcommand{\R}{\mathbb{R}}

\let\phi\varphi


\renewcommand{\bar}{\overline}

\usepackage{graphicx} 
\newcommand\rsetminus{\mathbin{\mathpalette\rsetminusaux\relax}}
\newcommand\rsetminusaux[2]{\mspace{-3mu}\raisebox{\rsmraise{#1}\depth}{\rotatebox[origin=c]{-35}{$#1\smallsetminus$}}\mspace{-3mu}}
\newcommand\rsmraise[1]{%
  \ifx#1\displaystyle 1 \else
    \ifx#1\textstyle .3 \else
      \ifx#1\scriptstyle .2 \else
        .1%
      \fi
    \fi
  \fi}

\let\setminus\bigsetminus

\theoremstyle{plain}      

\theoremstyle{definition} 
\newtheorem{thrm}{Theorem}[section]
\newtheorem*{thrm*}{Theorem}

\newtheorem{lemma}[thrm]{Lemma}
\newtheorem*{lemma*}{Lemma}
\newtheorem{propn}[thrm]{Proposition}
\newtheorem*{propn*}{Proposition}
\newtheorem{defn}{Definition}[section]
\newtheorem*{defn*}{Definition}

\theoremstyle{remark}     

\newtheoremstyle{cited}
  {3pt}
  {3pt}
  {}
  {}
  {\bfseries}
  {.}
  {.5em}
  {\thmname{#1} \thmnumber{#2} \thmnote{\normalfont#3}}

\theoremstyle{cited}


\begin{document} 
\title[Pseudocompactness and the UMP]{Pseudocompactness and the Uniform Metastability Principle in Model Theory}
\author[Hamel]{Clovis Hamel$^1$}
\author[Tall]{Franklin D. Tall$^1$}
\thanks{$^1$Research supported by NSERC grant A-7354.}

\date{\today}

\keywords{\justifying{Metastability, countable compactness, pointwise convergence, uniform convergence, uniform metastability principle, pseudocompactness}}

\subjclass[2010]{03C45, 03C95, 03C98, 40A05, 54D99}

\begin{abstract} We prove that uniform metastability is equivalent to all closed subspaces being pseudocompact and use this to provide a topological proof of the metatheorem introduced by Caicedo, Due\~{n}ez and Iovino on uniform metastability and countable compactness for logics.  

\end{abstract}
\maketitle

\section{Introduction}
Terence Tao \cite{Tao2008} introduced the notion of \textit{metastability}: for a single sequence, being metastable is just to be Cauchy. However, \textit{uniform metastability} is an intermediate form of convergence for families of sequences, in between uniform convergence and pointwise convergence. Tao has used metastability to solve problems in ergodic theory \cite{Tao}. Eduardo Due\~{n}ez and Jos\'{e} Iovino realized the logical nature of metastability and have been applying it extensively \cite{Duenez2017}. Due\~{n}ez, Iovino and Xavier Caicedo introduced the \textit{Topological Uniform Metastability Principle} \cite{Caicedo2019} and proved that it holds for countably compact spaces. Then they proved that the converse implication holds for the space of structures of continuous logic. The proof is long and model-theoretic, so we were inspired to find a purely topological short proof, which we present here, and from which the model theoretic result follows at once. This is likely the first application of \textit{pseudocompactness} to model theory. Since our intended audience is model theorists, we include proofs of relevant material on pseudocompactness scattered through the topological literature.  

\section{A Brief Introduction to Metastability}

\begin{defn}
A \textit{sampling of $\omega$} is a family $\set{\eta_n : n<\omega}\subseteq [\omega]^{<\omega}$ such that $\eta_n\subseteq \omega\setminus n$ for each $n<\omega$. Let $\mathcal{S}$ denote the set of all samplings of $\omega$.  
\\ Let $(X,d)$ be a metric space. A sequence $\langle x_n : n<\w \rangle$ is \textit{metastable} if for each $\varepsilon>0$ and each sampling $\eta$, there is $m<\omega$ such that $(\forall i,j\in \eta_m)\ (d(x_i,x_j)<\varepsilon)$. 
\end{defn}


It was proved by Due\~{n}ez and Iovino \cite{Duenez2017} that a sequence is metastable if and only if it is Cauchy. The relevant distinction occurs when one considers uniform metastability:

\begin{defn}
A family $A\subseteq X^\omega$, where $(X,d)$ is a metric space, is \textit{uniformly metastable} if there is a family $\set{E_{\varepsilon,\eta} :   \varepsilon>0 ,\ \eta \in \mathcal{S}}$ such that whenever $\eta\in \mathcal{S}$ and $\varepsilon>0$, each sequence in $A$ is metastable witnessed by the same $m<E_{\varepsilon,\eta}$. 
A sequence of functions $\langle f_n : n<\omega \rangle$ in $R^X$ is \textit{uniformly metastable} if there is a family $\set{E_{\varepsilon,\eta} :   \varepsilon >0 ,\ \eta \in \mathcal{S}}$ such that whenever $\eta\in \mathcal{S}$ and $\varepsilon >0$, for each $x\in X$ the sequence $\langle f_n(x) : n<\omega \rangle$ is metastable witnessed by the same $m<E_{\varepsilon,\eta}$.
\end{defn}

The following examples from an early version of \cite{Caicedo2019} show that uniform metastability is strictly in between uniform convergence and pointwise convergence:
\begin{itemize}
    \item The family of all eventually $0$ sequences in $2^\omega$ is not uniformly metastable even though each sequence is trivially convergent. To see this, take the subfamily of all sequences with arbitrarily long initial segments with alternated $0$'s and $1$'s and $\eta_n=\set{n,n+1}$.
    \item The set of all monotonic sequences in $2^\w$ is uniformly metastable witnessed by $E_{\varepsilon,\eta}=\max{\eta_0}$. However, the convergence is not uniform. 
\end{itemize}

It is a natural to ask when results regarding pointwise convergence of functions can be improved to uniform metastability in a way similar to that of Tao's metastable dominated convergence theorem \cite{Tao2008}. In \cite{Caicedo2019}, a topological proof is given for the following fact: if $X$ is countably compact, then on any closed subspace, there is no distinction between pointwise convergence and uniform metastability. The converse result is only proved in \cite{Caicedo2019} in a model theoretic setting using powerful machinery. We produced a topological proof of this converse result using the following fact: a countably compact space is a space with every closed subspace pseudocompact. The model theoretic result follows at once from this topological fact and a few basic remarks. 

\begin{defn}
A topological space $X$ is \textit{pseudocompact} if every continuous real-valued function on $X$ has bounded image. 
\end{defn}

There is a whole book devoted to pseudocompact spaces \cite{Hrusak2018}. The following basic result can also be found in \cite{Tkachuk2011}:

\begin{propn}
A completely regular space $X$ is pseudocompact if and only if every locally finite family of non-empty open sets (i.e. every point of $X$ has a neighbourhood meeting at most finitely many members of the family) is finite.
\end{propn}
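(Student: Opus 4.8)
The plan is to prove both implications by contraposition; complete regularity will be needed only for one of them.

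First, to see that excluding infinite locally finite families of non-empty open sets forces pseudocompactness, suppose $X$ is \emph{not} pseudocompact, so there is a continuous $f\colon X\to\R$ with unbounded image; then $|f|$ is continuous and $\sup_X|f|=\infty$. I would recursively choose points $x_n\in X$ with $|f|(x_0)$ arbitrary and $|f|(x_{n+1})>|f|(x_n)+2$, and put $r_n=|f|(x_n)$, so that $\langle r_n:n<\w\rangle$ is strictly increasing with consecutive gaps greater than $2$ and $r_n\to\infty$. The sets $U_n=|f|^{-1}\big((r_n-1,\,r_n+1)\big)$ are then open, non-empty (each contains $x_n$) and pairwise disjoint. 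They form a locally finite family: for $x\in X$, only finitely many of the length-$2$ intervals $(r_n-1,r_n+1)$ meet $\big(|f|(x)-1,\,|f|(x)+1\big)$ because $r_n\to\infty$, so the open neighbourhood $|f|^{-1}\big((|f|(x)-1,\,|f|(x)+1)\big)$ of $x$ meets only finitely many $U_n$. This contradicts the hypothesis, and this half uses no separation axiom.

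For the converse, suppose toward a contradiction that there is an infinite locally finite family of non-empty open sets. Passing to a subfamily, which is again locally finite, we may assume it is $\set{U_n:n<\w}$ with the $U_n$ pairwise distinct. Fix $x_n\in U_n$. Since $X$ is completely regular and $U_n$ is an open neighbourhood of $x_n$, there is a continuous $g_n\colon X\to[0,1]$ with $g_n(x_n)=1$ and $g_n=0$ on $X\setminus U_n$; set $f_n=n\cdot g_n$, so $0\le f_n\le n$, $f_n(x_n)=n$, and $f_n$ vanishes off $U_n$. Let $f=\sum_{n<\w}f_n$. By local finiteness, every $x\in X$ has an open neighbourhood $W$ meeting only finitely many $U_n$; on $W$ all but finitely many of the $f_n$ are identically $0$, so $f\restr W$ is a finite sum of continuous functions. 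Hence $f$ is a well-defined real-valued function, and it is continuous because it is continuous on each member of an open cover of $X$. But $f(x_n)\ge f_n(x_n)=n$ for all $n$, so $f$ has unbounded image, contradicting pseudocompactness.

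The proposition is routine, so there is no deep obstacle; the step needing care is the converse direction, where one must verify that $f=\sum_n f_n$ is genuinely both real-valued \emph{and} continuous — and both facts flow precisely (and only) from local finiteness, the hypothesis in play. The one bookkeeping point is the initial reduction to a countably infinite subfamily of pairwise distinct open sets, which is legitimate since a locally finite family of non-empty sets can have no member occurring infinitely often (a point of such a member would violate local finiteness). Complete regularity is invoked exactly once, to produce the bump functions $g_n$; the first implication is valid in an arbitrary topological space.
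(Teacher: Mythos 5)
Your proof is correct and follows essentially the same route as the paper's: bump functions supplied by complete regularity and summed via local finiteness in one direction, and preimages of intervals around an unbounded sequence of values of a continuous function in the other. The only differences are cosmetic ($|f|$ in place of $f^2$, and your gaps of size $>2$ actually secure the pairwise disjointness that the paper's gaps of size $>1$ do not quite give — though disjointness is not needed for the statement, only local finiteness).
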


\begin{proof}
Suppose $X$ is pseudocompact and that there is an infinite locally finite family of non-empty open sets $\set{U_n : n<\omega}$. Take $x_n\in U_n$ for each $n<\omega$. By complete regularity, take a continuous $f_n:X\to\R$ such that $f_n(x_n)=n$ and $f\restriction X\setminus U_n=0$. Then $F=\sum_{n<\omega}f_n$ is continuous since $\set{U_n : n<\omega}$ is locally finite: given $x\in X$, let $S_x=\set{n<\omega : x\in U_n}\in [\omega]^{\omega}$; the continuity of $F$ at $x$ follows from $\bigcup_{n\in S_x}{\bar U_n}=\Bar{\bigcup_{n\in S_x}U_n}$. 
Conversely, suppose $X$ is not pseudocompact, then there is an unbounded continuous function $f:X\to \R$. Since $f^2$ is also continuous and unbounded, we can assume $f\geq 0$. Let $x_0\in f[X]$; if $x_n\in f[X]$ has been constructed, take $x_{n+1}\in f[X]$ such that $f(x_{n+1})>f(x_n)+1$. If we denote the ball of center $f(x_n)$ and radius $1$ by $B(f(x_n),1)$, then $\mathcal{B}=\set{f^{-1}[B(f(x_n),1)] : n<\omega}$ is an infinite family of pairwise disjoint non-empty open sets. Now suppose $\mathcal{B}$ is not locally finite, then there is a point $x\in X$ such that every open neighbourhood of $x$ contains elements with arbitrarily large images, contradicting the continuity of $f$.
 
\end{proof}

\begin{rmk*}
Notice that when pseudocompactness fails, one can get the infinite locally finite family of open sets to be pairwise disjoint. Also notice that complete regularity is unnecessary for the direction ``every locally finite family of open sets is finite'' implies pseudocompactness. However, regularity is required. 
\end{rmk*}


The following proposition follows from a theorem and an exercise in \cite{Engelking1989};

\begin{propn}
A space is countably compact if and only if every closed subspace is pseudocompact.
\end{propn}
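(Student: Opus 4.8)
The plan is to prove the two implications separately: the ``easy'' direction directly, and the other by contraposition. \emph{Countably compact $\Rightarrow$ every closed subspace pseudocompact:} I would first recall that a closed subspace $F$ of a countably compact space $X$ is again countably compact — given a countable cover of $F$ by sets relatively open in $F$, write each as $U_n\cap F$ with $U_n$ open in $X$, adjoin $X\setminus F$ to obtain a countable open cover of $X$, take a finite subcover, and intersect back with $F$. Then I would observe that any countably compact space $Y$ is pseudocompact: if $f\colon Y\to\R$ is continuous, then $f[Y]$ is a countably compact subset of $\R$, and were it unbounded the pulled-back cover $\set{f^{-1}[(-n,n)] : n<\w}$ would be a countable open cover of $Y$ with no finite subcover. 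Combining these two observations gives the implication.

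\emph{Every closed subspace pseudocompact $\Rightarrow$ countably compact:} here I would argue contrapositively, assuming $X$ is not countably compact and exhibiting a closed subspace that is not pseudocompact. Fix a countable open cover $\set{U_n : n<\w}$ with no finite subcover; replacing $U_n$ by $U_0\cup\dots\cup U_n$ we may assume the cover is increasing, and then choose $x_n\in X\setminus U_n$ for each $n<\w$. Set $D=\set{x_n : n<\w}$. The set $D$ is infinite, for otherwise it would be covered by finitely many of the $U_n$, hence by a single $U_N$, forcing $U_N=X$. Moreover each $U_n$ meets $D$ only inside the finite set $\set{x_0,\dots,x_{n-1}}$ (since $x_m\notin U_m$ and $U_n\subseteq U_m$ whenever $n\le m$), so $D$ has no accumulation point and is a closed discrete subspace of $X$. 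Finally, an infinite discrete space carries an unbounded continuous real-valued function — every function on it is continuous — so the closed subspace $D$ is not pseudocompact, as required.

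The cover bookkeeping is routine, and the collapse of $f[Y]$'s boundedness in the forward direction is immediate. The one step that deserves care is the verification that $D$ is closed and discrete: this is precisely where the ``increasing cover'' trick pays off, and where a separation axiom such as $T_1$ is tacitly used to peel off the finitely many points of $D$ lying in each $U_n$. Once $D$ has been produced, its failure of pseudocompactness is automatic, so extracting this closed discrete subspace is the only real obstacle.
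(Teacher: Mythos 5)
Your proof is correct, and the backward direction is essentially the paper's: both arguments reduce failure of countable compactness to the existence of an infinite closed discrete subspace, on which any unbounded function is automatically continuous (the paper simply asserts the existence of such a subspace, whereas you extract it explicitly from an increasing cover with no finite subcover; you are also right to flag the tacit $T_1$ hypothesis, which the paper uses silently as well). The forward direction is where you genuinely diverge. The paper argues contrapositively: if a closed subspace $C$ is not pseudocompact, the construction in its Proposition 2.1 produces an infinite locally finite family of disjoint open sets in $C$, whence an infinite closed discrete subset of $C$ and hence of $X$, contradicting countable compactness. You instead give the direct and more elementary argument that a closed subspace of a countably compact space is countably compact and that countably compact implies pseudocompact via the cover $\set{f^{-1}[(-n,n)] : n<\omega}$; this avoids Proposition 2.1 and the locally-finite-family machinery altogether and needs no separation axiom in that direction, at the cost of not reusing the characterization the paper has already set up. One small slip: your justification that $D$ is infinite (``forcing $U_N=X$'') is not quite the right contradiction; if $D$ were finite then $D\subseteq U_N$ for some $N$ by increasingness, which contradicts $x_N\in D$ and $x_N\notin U_N$ directly. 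This is easily repaired and does not affect the argument.
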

\begin{proof}
If $X$ is countably compact and there is a closed subspace $C\subseteq X$ that is not pseudocompact then, as in the proof of Proposition 2.1, $C$ includes a closed discrete subspace and so does $X$, which contradicts countable compactness. Conversely, if $X$ is not countably compact, it includes a discrete closed set $C=\set{x_n : n<\omega}$. Letting $f(x_n)=n$, we obtain a continuous unbounded function on $C$.
\end{proof}

Now we present the connection between pseudocompactness and uniform metastability:

\begin{propn}
Let $X$ be a regular topological space. If every sequence of continuous real-valued functions $\langle f_n : n<\omega \rangle$ on $X$ that converges pointwise is uniformly metastable, then $X$ is pseudocompact.
\end{propn}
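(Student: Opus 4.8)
The plan is to prove the contrapositive: assuming $X$ is not pseudocompact, I will build a pointwise convergent sequence $\langle f_n : n<\omega\rangle$ of continuous real-valued functions on $X$ that is \emph{not} uniformly metastable. The model is the first example after Definition 2.2 --- the eventually-$0$ sequences with arbitrarily long alternating initial segments, which defeat uniform metastability when tested against the sampling $\eta_n=\{n,n+1\}$ --- so the task is to realize such sequences as the sequences $\langle f_n(x):n<\omega\rangle$ coming from a single sequence of continuous functions.

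First I extract the needed data. By Definition 2.3, since $X$ is not pseudocompact there is a continuous unbounded $f:X\to\R$; replacing $f$ by $f^2$ I may assume $f\ge 0$. Then, exactly as in the proof of Proposition 2.1, I choose points $x_k\in X$ with $a_k:=f(x_k)$ strictly increasing and $a_{k+1}>a_k+1$ for every $k$, so in particular $a_k\to\infty$. (Beyond the existence of such an $f$, no separation axiom is actually needed, since every function below is obtained by post-composing $f$ with a continuous map $\R\to\R$.)

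Next I build the functions. Since $a_{k+1}-a_k>1$, the intervals $J_k:=(a_k-\tfrac13,\,a_k+\tfrac13)$ are pairwise disjoint and no $J_k$ contains $a_{k'}$ for $k'\ne k$. For odd $n$ let $\varphi_n:\R\to[0,1]$ be the continuous function equal, on each $J_k$ with $k\ge n$, to the piecewise-linear tent with value $1$ at $a_k$ and value $0$ at the endpoints $a_k\pm\tfrac13$, and equal to $0$ off $\bigcup_{k\ge n}J_k$; this is continuous because only finitely many $J_k$ meet any bounded interval. Put $f_n:=\varphi_n\circ f$ for $n$ odd and $f_n:=0$ for $n$ even; each $f_n$ is continuous, and $f_n(x_k)=1$ exactly when $n$ is odd and $n\le k$, and $f_n(x_k)=0$ otherwise. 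Pointwise convergence is then immediate: the even-indexed terms vanish, and for odd $n$ we get $f_n(x)=\varphi_n(f(x))=0$ as soon as $a_n>f(x)+\tfrac13$, which holds for all large $n$ since $a_n\to\infty$; thus $f_n\to 0$ pointwise. To see that $\langle f_n:n<\omega\rangle$ is not uniformly metastable, take $\eta=\langle\{n,n+1\}:n<\omega\rangle\in\mathcal{S}$ and $\varepsilon=1$: for each $k$ the sequence $\langle f_n(x_k):n<\omega\rangle$ reads $0,1,0,1,\dots$ through index $k$ and is then constantly $0$, so $|f_m(x_k)-f_{m+1}(x_k)|=1$ for every $m<k$; hence any $m$ witnessing $1$-metastability of this sequence against $\eta$ must satisfy $m\ge k$. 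As $k$ is arbitrary, no natural number can serve as $E_{1,\eta}$ for all points of $X$, so $\langle f_n:n<\omega\rangle$ is a pointwise convergent sequence of continuous functions that is not uniformly metastable, contradicting the hypothesis. Therefore $X$ is pseudocompact.

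The argument is short; the only step that needs a little care is making $\varphi_n\to 0$ pointwise on $\R$ coexist with $\varphi_n\equiv 1$ on the tail $\{a_k:k\ge n\}$, and this is exactly what the gap condition $a_{k+1}>a_k+1$ (equivalently, the pairwise disjointness of the family $\mathcal{B}$ from the proof of Proposition 2.1) secures: the supports of the $\varphi_n$ recede to $+\infty$.
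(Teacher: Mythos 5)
Your proof is correct, and at its core it is the same argument as the paper's: both pass to the contrapositive, both extract from the failure of pseudocompactness an unbounded continuous $f\geq 0$ and points $x_k$ with $f(x_{k+1})>f(x_k)+1$, and both then manufacture a pointwise convergent sequence of continuous functions whose value-sequences at the $x_k$ exhibit arbitrarily long alternating $0$--$1$ initial segments, which kills uniform metastability against the sampling $\eta_n=\set{n,n+1}$ exactly as in the first example after Definition 2.2. The implementations differ in two minor but noteworthy ways. First, the paper interleaves the partial sums $g_n=\sum_{i\leq n}f_i$ with the total sum $F$, so its sequence converges to $F$ and the bad value-sequences are eventually $1$; you instead interleave the zero function with ``tail'' bump functions, so your sequence converges to $0$ and the bad value-sequences are eventually $0$ --- a purely cosmetic difference. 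Second, and more substantively, the paper's proof simply posits continuous bumps $f_n$ with $f_n(x_n)=1$ and $f_n\restriction X\setminus U_n=0$, whose existence under the stated hypothesis of mere regularity requires the (unstated) observation that the sets $U_n$ are preimages of intervals under $f$ and hence cozero; your construction makes this explicit by obtaining every function as a post-composition of $f$ with a piecewise-linear map on $\R$, so your argument needs no separation axiom at all beyond the existence of the unbounded continuous function. That is a small gain in self-containedness over the proof as written, at the cost of slightly more bookkeeping with the tents $\varphi_n$.
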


\begin{proof}
Suppose $X$ is not pseudocompact and let $\mathcal B =\set{U_n : n<\omega}$ be a infinite locally finite family of pairwise disjoint non-empty open sets. For each $n<\omega$, take $x_n\in U_n$. Then consider the functions $f_n:X\to \R$ such that $f(x_n)=1$ and $f\restriction X\setminus U_n=0$. Then the function $F=\sum_{n<\omega}f_n$ is continuous since $\mathcal B$ is locally finite. Consider $g_n=\sum_{i\leq n}f_i$. Then the sequence of continuous functions $\langle g_0, F, g_1, F, g_2, F,\ldots \rangle$ converges pointwise to $F$ but it is not uniformly metastable as it contains all eventually $1$ sequences with arbitrarily long initial segments of alternating $0$'s and $1$'s.    
\end{proof}

\begin{defn}
The \textit{Topological Uniform Metastability Principle} holds for a topological space $X$ if whenever a sequence of real-valued continuous functions converges pointwise on a closed subspace $C\subseteq X$, it is uniformly metastable on $C$.
\end{defn}

The previous results allow us to characterize the equivalence between the topological uniform metastability principle and pointwise convergence.   

\begin{thrm} 
Let $X$ be a completely regular space. Then $X$ is countably compact if and only if the topological uniform metastability principle holds for $X$. 
\end{thrm}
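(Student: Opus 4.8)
The plan is to prove the two implications separately; most of the work has already been front‑loaded into Propositions 2.1--2.3, so the theorem is largely an assembly.

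For the direction ``countably compact $\Rightarrow$ topological UMP'', which is due to Caicedo, Due\~{n}ez and Iovino \cite{Caicedo2019}, I would either cite it or supply the following short topological argument. Since a closed subspace of a countably compact space is countably compact, it suffices to show that a pointwise convergent sequence $\langle f_n : n<\omega\rangle$ of continuous real‑valued functions on a countably compact space $C$ is uniformly metastable. Suppose not. Each sequence $\langle f_n(x) : n<\omega\rangle$ is Cauchy, hence metastable, so for each $\varepsilon>0$ and $\eta\in\mathcal S$ there is a least $M(\varepsilon,\eta,x)<\omega$ witnessing this, and uniform metastability is exactly the assertion that $\sup_{x\in C}M(\varepsilon,\eta,x)<\omega$ for all $\varepsilon,\eta$. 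Its failure therefore produces a \emph{fixed} $\varepsilon>0$ and a \emph{fixed} $\eta\in\mathcal S$ with $\sup_{x\in C}M(\varepsilon,\eta,x)=\infty$; for each $m<\omega$ choose $x_m\in C$ with $M(\varepsilon,\eta,x_m)\geq m+1$, so that for every $k\leq m$ there are $i,j\in\eta_k$ with $|f_i(x_m)-f_j(x_m)|\geq\varepsilon$. Let $p\in C$ be a cluster point of $\langle x_m : m<\omega\rangle$, which exists by countable compactness. As $\langle f_n(p) : n<\omega\rangle$ is Cauchy, fix $N<\omega$ with $|f_i(p)-f_j(p)|<\varepsilon/2$ for all $i,j\geq N$. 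Then $V=\bigcap\set{\,y\in C : |f_i(y)-f_j(y)|<\varepsilon/2\,}$, the intersection taken over the finitely many pairs $i,j\in\eta_N$ (note $\eta_N\subseteq\omega\setminus N$), is open and contains $p$. Since $p$ is a cluster point there is $m\geq N$ with $x_m\in V$, so $|f_i(x_m)-f_j(x_m)|<\varepsilon/2<\varepsilon$ for all $i,j\in\eta_N$, contradicting the property of $x_m$ at $k=N\leq m$.

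For the converse, ``topological UMP $\Rightarrow$ countably compact'', I would run the machinery already in place. By Proposition 2.2 it is enough to show that every closed subspace $C\subseteq X$ is pseudocompact. Such a $C$ is completely regular, hence regular, so Proposition 2.3 applies to $C$; and the hypothesis of Proposition 2.3 for the space $C$---that every pointwise convergent sequence of continuous real‑valued functions on $C$ is uniformly metastable---is precisely what the topological uniform metastability principle for $X$ yields when the closed subspace in Definition 2.6 is taken to be $C$ itself. Hence $C$ is pseudocompact, and $X$ is countably compact, completing the equivalence.

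The main obstacle, if one reproves the forward direction, is exactly the quantifier bookkeeping highlighted above: arranging that the single finite set $\eta_N$ simultaneously ``fails at $x_m$'' (because $N\leq m$) and ``succeeds on a neighbourhood of $p$'' (by continuity of the finitely many differences $f_i-f_j$). There is also one point of care in the converse: if Definition 2.6 is read as requiring the functions to be continuous on all of $X$ rather than on $C$, one cannot quote Proposition 2.3 for $C$ verbatim. I would dispose of this by reducing to the case where $C=D=\set{x_n : n<\omega}$ is an infinite closed discrete subset of $X$ (which exists precisely when $X$ is not countably compact): using complete regularity of $X$, pick continuous $h_n:X\to[0,1]$ with $h_n(x_n)=1$ and $h_n\restriction(D\setminus\set{x_n})=0$ (possible since $D\setminus\set{x_n}$ is closed in $X$), put $g_n=h_0\vee\cdots\vee h_n$, note $g_n\restriction D$ is the indicator of $\set{x_0,\dots,x_n}$, and observe that $\langle g_0,\mathbf 1,g_1,\mathbf 1,g_2,\mathbf 1,\dots\rangle$, with $\mathbf 1$ the constant function $1$, is a sequence of functions continuous on $X$, converging pointwise on $D$, yet not uniformly metastable on $D$ (with $\varepsilon=1$ and $\eta_m=\set{m,m+1}$ the least witness at $x_k$ grows without bound in $k$). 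This is also where complete regularity---rather than mere regularity---is genuinely needed.
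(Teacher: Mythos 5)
Your proof is correct and follows essentially the same route as the paper: the converse is exactly the paper's assembly of Propositions 2.2 and 2.3, and your forward direction is the cluster-point reformulation of the paper's argument that the closed sets $A_k=\{z : \max_{i,j\in\eta_k}|f_i(z)-f_j(z)|\geq\varepsilon\}$ form a centred family which, by countable compactness, has a common point contradicting pointwise convergence. Your extra care in the converse---extending the witnessing functions from the closed discrete set $D$ to all of $X$ via complete regularity---addresses a point the paper glosses over (Proposition 2.3 as stated only yields functions continuous on $C$) and is a worthwhile refinement rather than a different approach.
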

\begin{proof}
We reproduce the proof given in an early version of \cite{Caicedo2019} when $X$ is countably compact: assume uniform metastability fails and let $\varepsilon>0$ and $\eta \in \mathcal{S}$ be witnesses of this fact. Then, for each $n<\omega$, there is $x\in X$ such that for each $k<n$, $M_{x,k}=\max{\set{\abs {f_i(x)-f_j(x)}: i,j\in \eta_k}}\geq\varepsilon$. Then $x\in \Insect_{k\leq n} A_k$ where $A_k=\set{z\in X : M_{z,k}\geq\varepsilon}$ is closed by the continuity of the $f_n$'s. Thus $\set{A_k : k<\omega}$ is centred and by countable compactness, there is $x\in \Insect_{k<\omega} A_k$, which contradicts the convergence of $\langle f_n(x) : n<\omega \rangle$.
Conversely, suppose $X$ is not countably compact. Then, by Proposition 2.2, there is a closed subspace $C\subseteq X$ that is not pseudocompact and so, by Proposition 2.3, there is a sequence of continuous real-valued functions on $X$ that converges pointwise on $C$ but is not uniformly metastable on $C$.
\end{proof}

\begin{rmk*}
The current version of \cite{Caicedo2019} proves the previous equivalence assuming that $X$ is regular and paracompact. We just showed that the paracompactness assumption can be replaced by  assuming that $X$ is completely regular. Also, \cite{Caicedo2019} points out that the analogue of the uniform metastability principle for nets, instead of sequences, is equivalent to $X$ being compact. 
\end{rmk*}

\section{The Uniform Metastability Principle}

Logics for metric structures are properly presented in \cite{Eagle2017} and \cite{Caicedo2017}. Given a logic for metric structures $\mathcal{L}$ and a language $L$, recall that the topology on the space of $L$-structures $Str(L)$ is determined by the basic closed sets $[\varphi]=\set{\mathfrak{M}\in Str(L) : \mathfrak{M}\models \varphi}$. We regard $L$-sentences as continuous $[0,1]$-valued functions on the space of $L$-structures in the natural way: $\mathfrak{M} \mapsto \varphi^\mathfrak{M}$. In this context, we now define the model theoretic analogue of metastability:

\begin{defn}
Let $\mathcal{L}$ be a logic for metric structures and $L$ a language. Given an $L$-theory $T$, we say that a sequence of $L$-sentences $\langle \varphi_n : n<\omega \rangle$ \textit{converges pointwise modulo $T$} if and only for for every model $\mathfrak{M}$ of $T$, the sequence $\langle {\varphi_n}^\mathfrak{M} : n<\omega \rangle$ converges. We say that the sequence is \textit{uniformly metastable modulo $T$} if the family $\set{\langle {\varphi_n}^\mathfrak{M} : n<\omega \rangle : \mathfrak{M}\models \varphi}$ is uniformly metastable.
\end{defn}

\begin{defn}
The Uniform Metastability Principle (UMP) for a logic $\mathcal{L}$ is the following statement: "if $L$ is a vocabulary and $T$ is an $L$-theory, then every sequence of $L$-sentences $\langle \varphi_n : n<\omega \rangle$ that converges pointwise modulo $T$ is also uniformly metastable modulo $T$".
\end{defn}

In an early version of \cite{Caicedo2019}, it was proved that the UMP is equivalent to the logic being countably compact. This follows from the following two lemmas:

\begin{lemma}
The logic topology is completely regular.
\end{lemma}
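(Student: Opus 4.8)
The plan is to recognize the logic topology on $Str(L)$ as the weak (initial) topology induced by the family of $[0,1]$-valued functions $\widehat\varphi\colon\mathfrak{M}\mapsto\varphi^{\mathfrak{M}}$, $\varphi$ ranging over all $L$-sentences, and then to invoke the standard fact that any space carrying the weak topology of a family of continuous $[0,1]$-valued functions is completely regular (such a space embeds, after passing to its Kolmogorov quotient, into the cube $[0,1]^{I}$ with $I$ the set of $L$-sentences, whence it is completely regular). I would also record in the text the caveat that the logic topology is in general not $T_1$, since elementarily equivalent structures are topologically indistinguishable, so here ``completely regular'' is meant as the separation property $T_{3.5}$ stripped of $T_1$; replacing $Str(L)$ by its Kolmogorov quotient yields a genuine Tychonoff space with the same behaviour of closed subspaces under pseudocompactness, so this costs nothing in what follows.

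Concretely I would write $\tau$ for the logic topology, whose closed sets are the intersections of finite unions of basic closed sets $[\varphi]$, and $\tau_w$ for the weak topology of $\set{\widehat\varphi}$, and verify the two inclusions. For $\tau_w\subseteq\tau$ it suffices that each $\widehat\varphi$ be $\tau$-continuous; this is already built into the usual presentation of a logic for metric structures, but from scratch one derives it from the connectives: for $r$ in the dense set of dyadic rationals of $[0,1]$ the truncated translates $\varphi\dotminus r$ and $r\dotminus\varphi$ are again $L$-sentences, and the identities
\[
\widehat\varphi^{-1}\big([0,r]\big)=\set{\mathfrak{M} : \varphi^{\mathfrak{M}}\le r}=[\varphi\dotminus r],\qquad \widehat\varphi^{-1}\big([r,1]\big)=\set{\mathfrak{M} : \varphi^{\mathfrak{M}}\ge r}=[r\dotminus\varphi]
\]
exhibit these sets as $\tau$-closed, so $\widehat\varphi$ pulls back the subbasic open sets $[0,r)$ and $(r,1]$ of $[0,1]$ to $\tau$-open sets and is therefore $\tau$-continuous. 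For $\tau\subseteq\tau_w$ it suffices that every basic closed set be $\tau_w$-closed, which is immediate since $[\varphi]=\widehat\varphi^{-1}(\set{0})$ and $\set{0}$ is closed in $[0,1]$. Hence $\tau=\tau_w$, and complete regularity follows from the soft fact quoted above.

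The step needing the most care --- indeed essentially the only one with content --- is the appeal to connectives: one must confirm that the abstract framework of ``logic for metric structures'' of \cite{Eagle2017,Caicedo2017} really does provide truncated subtraction $x\dotminus y$ together with enough rational constants, so that $\varphi\dotminus r$ and $r\dotminus\varphi$ are legitimate $L$-sentences and $\set{\mathfrak{M} : \varphi^{\mathfrak{M}}\le r}$, $\set{\mathfrak{M} : \varphi^{\mathfrak{M}}\ge r}$ are genuine basic closed sets. With that confirmed --- or, more cheaply, with the continuity of the $\widehat\varphi$ simply taken as given, as the paper already does when it regards sentences as continuous functions --- the rest is routine general topology and I anticipate no further obstacle.
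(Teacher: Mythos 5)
Your proof is correct, but it takes a genuinely different route from the paper's. The paper argues directly: given a closed $C$ and $\mathfrak{M}\notin C$, the description of closed sets as intersections of basic closed sets yields a single sentence $\varphi$ with $C\subseteq[\varphi]$ and $\varphi^{\mathfrak{M}}<1$, and that one sentence, regarded as a continuous function, already separates $\mathfrak{M}$ from $C$ (up to an implicit renormalization of its values to hit exactly $0$ and $1$). You instead identify the logic topology with the initial (weak) topology of the family $\set{\widehat\varphi}$ and invoke the general fact that a weak topology induced by $[0,1]$-valued functions is completely regular. The two arguments share their only substantive ingredient, namely the continuity of each $\widehat\varphi$ with respect to the topology generated by the sets $[\psi]$: the paper takes this as part of the presentation of a logic for metric structures, whereas you derive it from truncated subtraction and rational constants, which is the more honest account in an abstract framework where only the basic closed sets are given. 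What your route buys is that explicitness, together with the (correct, and absent from the paper) caveat that the logic topology is in general not $T_1$, so ``completely regular'' must be read without $T_1$; what the paper's route buys is brevity and independence from any assumption on the connectives beyond what makes the $[\varphi]$ a closed base. One small mismatch to reconcile: you normalize truth as the value $0$, writing $[\varphi]=\widehat\varphi^{-1}(\set{0})$, whereas the paper's own proof of this lemma uses the value $1$ for truth; the argument is insensitive to the choice, but a single convention should be fixed throughout.
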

\begin{proof}
Let $C\subseteq Str(L)$ and $\mathfrak M\notin C$. Then there must be a formula $\varphi$ such that $\varphi^{\mathfrak M}<1$ and $(\forall \mathfrak N\in C)\ \varphi^{\mathfrak N}=1$ (as otherwise $\mathfrak M$ would belong to $C$ by the definition of the topology on $Str(L)$). Then $\varphi$ is the continuous function that separates $C$ and $\mathfrak{M}$.
\end{proof}

\begin{lemma}
The closed subspaces of the logic topology are completely determined by $L$-theories, i.e. $C\subseteq Str(L)$ is closed if and only if there is an $L$-theory $T$ such that $C$ is the set of $L$-structures that are models of $T$. 
\end{lemma}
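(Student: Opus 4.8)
The plan is to prove the two directions separately; each is mostly bookkeeping about the definition of the logic topology, with a single genuinely logical point hiding in the converse.

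For the ``if'' direction, suppose $C=\set{\mathfrak{M}\in Str(L):\mathfrak{M}\models T}$ for some $L$-theory $T$. Unwinding the definition of being a model of a theory gives $C=\Insect_{\varphi\in T}[\varphi]$ (with $C=Str(L)$ when $T=\emptyset$). Each $[\varphi]$ is a basic closed set of $Str(L)$, and an arbitrary intersection of closed sets is closed, so $C$ is closed.

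For the ``only if'' direction, let $C\subseteq Str(L)$ be closed and set $T=\set{\varphi:\varphi\text{ is an }L\text{-sentence and }C\subseteq[\varphi]}$. I claim $C$ is exactly the class of models of $T$, i.e.\ $C=\Insect\set{[\varphi]:C\subseteq[\varphi]}$. The inclusion $\subseteq$ is immediate from the choice of $T$. For the reverse inclusion, fix $\mathfrak{M}\notin C$. Since $Str(L)\setminus C$ is open and the basic closed sets $[\varphi]$ form a basis for the closed sets of $Str(L)$, there is a \emph{single} $L$-sentence $\varphi$ with $C\subseteq[\varphi]$ and $\mathfrak{M}\notin[\varphi]$; this $\varphi$ lies in $T$ and witnesses $\mathfrak{M}\notin\Insect\set{[\psi]:C\subseteq[\psi]}$. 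Hence $\Insect\set{[\psi]:C\subseteq[\psi]}\subseteq C$, and the two inclusions give the claim.

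The step to watch --- the only place where the nature of the logic enters --- is the passage in the previous paragraph from ``$\mathfrak{M}$ lies outside the closed set $C$'' to a \emph{single} sentence $\varphi$ with $C\subseteq[\varphi]\not\ni\mathfrak{M}$, rather than merely a finite family $\varphi_1,\dots,\varphi_n$ with $C\subseteq[\varphi_1]\cup\dots\cup[\varphi_n]$ and $\mathfrak{M}$ outside that union. This is precisely the assertion that the $[\varphi]$'s form a basis, not just a subbasis, for the closed topology of $Str(L)$, and it rests on the requirement that a logic for metric structures be closed under the finitary connectives: under the convention (the same as in the proof of the preceding lemma) that $\mathfrak{M}\models\varphi$ iff $\varphi^{\mathfrak{M}}=1$, one has $[\varphi]\cup[\psi]=[\max(\varphi,\psi)]$, so the family $\set{[\varphi]:\varphi\text{ an }L\text{-sentence}}$ is closed under finite unions; the precise list of connectives may be found in \cite{Eagle2017} and \cite{Caicedo2017}. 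Granting this, the argument above is complete; alternatively, keeping the $[\varphi]$'s as a subbasis, one collapses $[\varphi_1]\cup\dots\cup[\varphi_n]$ to $[\max(\varphi_1,\dots,\varphi_n)]$ at the relevant point and proceeds as before.
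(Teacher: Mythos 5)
Your proof is correct and follows essentially the same route as the paper's: a closed set is an intersection of basic closed sets $[\varphi]$, and such an intersection is exactly the class of models of the theory consisting of those sentences. The only difference is that you justify in detail why the $[\varphi]$'s form a base (not merely a subbase) for the closed sets via closure under $\max$, a point the paper simply absorbs into the phrase ``the topology is determined by the basic closed sets.''
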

\begin{proof}
Suppose $C$ is a closed set in $Str(L)$, then $C$ is the intersection of basic closed sets, say $C=\Insect_{\alpha<\kappa}[\varphi_\alpha]$. Thus $C$ is the set of $L$-structures that are models of the theory $T=\set{\varphi_\alpha : \alpha<\kappa}$. Conversely, each model of an $L$-theory $T$ belongs to the intersection of all $[\varphi]$ where $\varphi$ ranges over $T$. 
\end{proof}

\begin{defn}
A \textit{logic $\mathcal{L}$ is countably compact} if and only if given a language $L$, the space of $L$-structures $Str(L)$ is countably compact. 
\end{defn}

Putting all this together, we easily obtain the main result of the early version of \cite{Caicedo2019}:

\begin{thrm}
Let $\mathcal{L}$ be a logic for metric structures. The UMP holds if and only if $\mathcal{L}$ is countably compact. 
\end{thrm}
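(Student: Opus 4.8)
The plan is to run Theorem~2.4, language by language, through the dictionary provided by Lemmas~3.3 and~3.4. Fix a language $L$. By Lemma~3.3 the space $Str(L)$ is completely regular, and by Lemma~3.4 its closed subspaces are exactly the classes $Mod(T)$ of models of $L$-theories $T$. Each $L$-sentence $\varphi$ is a continuous $[0,1]$-valued function on $Str(L)$ via $\mathfrak{M}\mapsto\varphi^{\mathfrak{M}}$, and unwinding Definitions~3.1 and~2.2 one sees that a sequence of $L$-sentences converges pointwise modulo $T$ (respectively, is uniformly metastable modulo $T$) exactly when the associated sequence of continuous functions converges pointwise on the closed subspace $Mod(T)$ (respectively, is uniformly metastable there). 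Thus the UMP for $\mathcal{L}$ is the assertion: for every $L$ and every closed $Mod(T)\subseteq Str(L)$, every sequence of $L$-sentences converging pointwise on $Mod(T)$ is uniformly metastable on $Mod(T)$. Note that this is formally weaker than the Topological Uniform Metastability Principle for $Str(L)$, which quantifies over all continuous real-valued functions rather than only $L$-sentences.

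If $\mathcal{L}$ is countably compact, then each $Str(L)$ is countably compact and completely regular, hence satisfies the Topological Uniform Metastability Principle by Theorem~2.4; specialising to sequences of $L$-sentences and translating back via the dictionary gives the UMP. This direction requires nothing beyond Theorem~2.4 together with Lemmas~3.3 and~3.4.

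The content is in the converse, which I prove contrapositively. Suppose $\mathcal{L}$ is not countably compact and fix $L$ with $Str(L)$ not countably compact. As in the proof of Proposition~2.2, $Str(L)$ contains a countably infinite closed discrete subspace $D=\set{\mathfrak{M}_n:n<\omega}$ (the $\mathfrak{M}_n$ distinct), and by Lemma~3.4 we may write $D=Mod(T_0)$ for some $L$-theory $T_0$. The idea is to imitate the construction in the proof of Proposition~2.3, but localised to $D$, where the bump functions used there become $L$-sentences. Because $D$ is discrete, $\set{\mathfrak{M}_n}$ is relatively open in $D$, so some basic open set $B_n=\Insect_{i<k_n}\set{\mathfrak{M}\in Str(L) : \theta_{n,i}^{\mathfrak{M}}<1}$, with each $\theta_{n,i}$ an $L$-sentence, satisfies $\mathfrak{M}_n\in B_n$ and $B_n\cap D=\set{\mathfrak{M}_n}$. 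Applying to $\max_{i<k_n}\theta_{n,i}$ a continuous unary connective that takes the value $c_n:=\max_{i<k_n}\theta_{n,i}^{\mathfrak{M}_n}$ (which is $<1$ since $\mathfrak{M}_n\in B_n$) to $1$ and takes $1$ to $0$ produces an $L$-sentence $\psi_n$ with $\psi_n^{\mathfrak{M}_n}=1$ and, since $\mathfrak{M}_k\notin B_n$ for $k\neq n$, with $\psi_n^{\mathfrak{M}_k}=0$ for all $k\neq n$. Set $g_m=\max(\psi_0,\dots,\psi_m)$ and let $\hat 1$ be an $L$-sentence with $\hat 1^{\mathfrak{M}_n}=1$ for all $n$ (for instance $\max(\psi_0,1-\psi_0)$, as $\psi_0^{\mathfrak{M}_n}\in\set{0,1}$ for every $n$). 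Then $\langle g_0,\hat 1,g_1,\hat 1,g_2,\hat 1,\dots\rangle$ is a sequence of $L$-sentences whose value sequence at $\mathfrak{M}_n$ is the alternating block $0,1,0,1,\dots,0,1$ of length $2n$ followed by $1,1,1,\dots$; it therefore converges pointwise modulo $T_0$, but for the sampling $\eta_m=\set{m,m+1}$ and $\varepsilon=\tfrac12$ the first admissible witness at $\mathfrak{M}_n$ is $m=2n-1$, so no uniform bound $E_{\varepsilon,\eta}$ exists. Hence the UMP fails for $\mathcal{L}$, completing the contrapositive.

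The step I expect to be the crux is keeping the failure witnesses inside the class of $L$-sentences. Proposition~2.3 and the failure half of Theorem~2.4 produce arbitrary continuous real-valued functions, and in particular the pointwise limit $\sum_n\psi_n$ occurring there is an infinite supremum with no reason to be an $L$-sentence. Restricting to a \emph{discrete} closed subspace $D$ removes the difficulty: on $D$ that limit is constantly $1$, and every sentence used in the construction is a finite combination, through the connectives of $\mathcal{L}$, of the separating sentences $\theta_{n,i}$. One should verify that the logic indeed admits the finitely many connectives invoked, which is the case for logics for metric structures as set up in \cite{Eagle2017, Caicedo2017}.
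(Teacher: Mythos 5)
Your proof is correct, and it is in fact more complete than the argument the paper actually records. The paper derives the theorem from Theorem 2.4 together with the two lemmas of Section III (complete regularity of $Str(L)$, and the identification of closed subspaces with classes $Mod(T)$), and leaves the verification at ``putting all this together.'' That really does suffice for the forward direction, exactly as you argue: countable compactness of each $Str(L)$ gives the Topological Uniform Metastability Principle there, and the UMP is just its restriction to sequences of sentences on closed sets of the form $Mod(T)$. For the converse, however, the topological machinery of Section II (Proposition 2.3 and the second half of Theorem 2.4) produces arbitrary continuous real-valued witnesses to the failure, and --- as you correctly isolate as the crux --- there is no a priori reason these are $L$-sentences; the UMP, being formally weaker than the topological principle, is not refuted by them. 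Your repair closes exactly this gap: pass to a countably infinite closed discrete $D=Mod(T_0)$, separate each $\mathfrak{M}_n$ from the rest of $D$ by a basic open set built from finitely many sentences, rescale by a continuous connective to obtain sentences $\psi_n$ that are $1$ at $\mathfrak{M}_n$ and $0$ at the other $\mathfrak{M}_k$, and interleave $\max(\psi_0,\dots,\psi_m)$ with a sentence constantly $1$ on $D$; your computation that the least metastability witness at $\mathfrak{M}_n$ for $\eta_m=\{m,m+1\}$, $\varepsilon=\tfrac12$ is $2n-1$ is right, so uniform metastability fails modulo $T_0$. The only hypothesis you use beyond what the paper states is closure of $\mathcal{L}$ under a few continuous (piecewise linear) connectives, which you flag explicitly and which holds in the cited frameworks. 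In short: same reduction as the paper, but you supply the sentence-level construction that the paper's one-line derivation glosses over, and that construction is needed.
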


\newpage
\bibliographystyle{alpha}
\bibliography{refdb2020.bib}

\end{document}